\definecolor{mylinkcolor}{rgb}{0.5,0.0,0.0}
\definecolor{myurlcolor}{rgb}{0.0,0.0,0.75}
\newcommand{\F}{\mathbf{F}}
\newcommand{\Z}{\mathbf{Z}}
\newcommand{\Q}{\mathbf{Q}}
\newcommand{\R}{\mathbf{R}}
\newcommand{\C}{\mathbf{C}}
\newcommand{\Qbar}{{\overline{\Q}}}
\newcommand{\PGL}{\operatorname{PGL}}
\newcommand{\GL}{\operatorname{GL}}
\newcommand{\Sp}{\operatorname{Sp}}
\newcommand{\GSp}{\operatorname{GSp}}
\newcommand{\Gal}{\operatorname{Gal}}
\newcommand{\Jac}{\operatorname{Jac}}
\newcommand{\End}{\operatorname{End}}
\newtheorem{theorem}{Theorem}[section]
\newtheorem{proposition}[theorem]{Proposition}
\theoremstyle{definition}
\newtheorem{definition}[theorem]{Definition}
\newtheorem{example}[theorem]{Example}
\newtheorem{remark}[theorem]{Remark}
\title{A database of nonhyperelliptic genus 3 curves over $\Q$}
\author{{\small Andrew V. Sutherland}}
\thanks{The author was supported by NSF grant DMS-1522526 and Simons Foundation grant 550033.}
\begin{document}

\begin{abstract}
We report on the construction of a database of nonhyperelliptic genus 3 curves over $\Q$ of small discriminant.
\end{abstract}

\maketitle

\section{Introduction}

Cremona's tables of elliptic curves over $\Q$ have long been a useful resource for number theorists, and for mathematicians in general \cite{CremonaTables}.
The most current version of Cremona's tables, and similar tables of elliptic curves over various number fields, can be found in the $L$-functions and modular forms database (LMFDB) \cite{lmfdb}.
Motivated by the utility of Cremona's tables, the LMFDB now includes a table of genus 2 curves over $\Q$ whose construction is described in \cite{BSSVY}.
The goal of this article is to describe the first steps toward the construction of a similar table of genus 3 curves over $\Q$.

Thanks to the modularity theorem, elliptic curves over $\Q$ can be comprehensively tabulated by conductor, as described in \cite{CremonaTables}.
Tabulations by conductor are useful for several reasons, most notably because this invariant can be directly associated to the corresponding $L$-function.
Unfortunately, no comparable method is yet available for higher genus curves, or more generally, for abelian varieties of dimension greater than one.
However, one can instead organize curves by discriminant.  The discriminant of a curve is necessarily divisible by every prime that divides the conductor of its Jacobian, and it imposes bounds on the valuation of the conductor at those primes.  In particular, if the discriminant is prime, it is necessarily equal to the conductor (every abelian variety over~$\Q$ has bad reduction at some prime \cite{fontaine}), and if the discriminant is small, then the conductor must also be small.

Curves of small discriminant (and hence of small conductor) are interesting for several reasons.
First, with enough effort one can obtain a reasonably comprehensive list by exhaustively enumerating curves with bounded coefficients, as noted in \cite[\S 3]{BSSVY}.
Another reason is practical: it is only for such curves that one has reasonable hope of computing certain invariants, such as the analytic rank of the Jacobian, or special values of its $L$-function.
Finally, there is the phenomenon of small numbers: interesting exceptions that arise from improbable collisions that are more likely to occur early in the tabulation.
Two such examples arise for the absolute discriminants 6050 and 8233.  The Jacobian of the discriminant 6050 curve is $\Q$-isogenous to the product of an elliptic curve of conductor 11 and an abelian surface of conductor 550; this is notable because no abelian surface over $\Q$ of conductor 550 was previously known, despite having been actively sought in the context of the paramodular conjecture (see \cite[\S 8]{FKL}, for example).  The Jacobian of the prime discriminant 8233 curve has the smallest prime conductor we found in our search of nonhyperelliptic genus~3 curves; this also happens to be the smallest prime conductor we found in our search of hyperelliptic genus 3 curves, and in fact these hyperelliptic and non-hyperelliptic Jacobians appear to be isogenous.  See \S \ref{sec:examples} for details of these and other examples.

The methods used in \cite{BSSVY} extend fairly easily to genus 3 hyperelliptic curves and have been used to construct a list of genus 3 hyperelliptic curves over $\Q$ of small discriminant, and to compute their conductors, Euler factors at bad primes, endomorphism rings, and Sato-Tate groups.  We plan to make this data available in the LMFDB later this year (2018); a preliminary list of these curves can be found at the author's website.  In this article we focus on the more difficult case of (nonsingular) nonhyperelliptic curves of genus~3, which represent the generic case of a genus 3 curve and always have a model of the form $f(x,y,z)=0$, where~$f$ is a ternary quartic form.

In order to keep the length of this article reasonable, and in recognition of the fact that there is still work in progress to compute some of the invariants mentioned above, we focus only on the first step in the construction of this database: an enumeration of all smooth plane quartic curves with coefficients of absolute value at most $B_c\coloneqq 9$, with the aim of obtaining a set of unique $\Q$-isomorphism class representatives for all such curves that have absolute discriminant at most $B_\Delta\coloneqq 10^7$.

Even after accounting for obvious symmetries, this involves more than $10^{17.5}$ possible curve equations and requires a massively distributed computation to complete in a reasonable amount of time.
Efficiently computing the discriminants of these equations is a non-trivial task, much more so than in the hyperelliptic case, and much of this article is devoted to an explanation of how this was done.  Many of the techniques that we use can be generalized to other enumeration problems and may be of independent interest, both from an algorithmic perspective, and as an example of how cloud computing can be effectively applied to a research problem in number theory. A list of the curves that were found (more than 80 thousand) is available on the author's website \cite{drew}.

\begin{remark}
The informed reader will know that not every genus 3 curve over $\Q$ falls into the category of smooth plane quartics $f(x,y,z)=0$ or curves with a hyperelliptic model $y^2+h(x)y=f(x)$.
The other possibility is a degree-2 cover of a pointless conic; see \cite{HMS} for a discussion of such curves and algorithms to efficiently compute their $L$-functions.
We plan to conduct a separate search for curves of this form that will also become part of the genus 3 database in the LMFDB.
\end{remark}

\subsection{Acknowledgments}
The author is grateful to Nils Bruin, Armand Brumer, John Cremona, Tim Dokchitser, Jeroen Sijsling, Michael Stoll, and John Voight for their insight and helpful comments, and to the anonymous referees for their careful reading and suggestions for improvement.

\section{The discriminant of a smooth plane curve}

Let $\C[x]_d$ denote the space of ternary forms of degree $d\ge 1$, as homogeneous polynomials in the variables $x\coloneqq (x_0,x_1,x_2)$.
It is a $\C$-vector space of dimension $n_d\coloneqq \binom{d+2}{2}$ equipped with a standard monomial basis
\[
B_d\coloneqq \{x^u: u\in E_d\},\qquad E_d\coloneqq \{(u_0,u_1,u_2)\in\Z^3: u_0,u_1,u_2\ge 0,\ u_0+u_1+u_2=d\}.
\]
The corresponding dual basis $B_d^*$ for $\C[x]_d^*$ consists of linear functionals $\delta_u\colon \C[x]_d\to \C$ defined by $\sum_u f_ux^u\mapsto f_u$, so that $\delta_u(f)$ is the coefficient of $x_u$ in $f$.  We define $\delta\colon \C[x]_d\to \C^{n_d}$ by $f\mapsto (\delta_u(f))_u$ and $\hat\delta\colon \C^{n_d}\to \C[x]_d$ by $(f_u)_u\mapsto \sum_u f_ux^u$.

A polynomial $f\in \C[x]_d$ is \emph{singular} if $f$ and its partial derivatives $\partial_0 f$, $\partial_1 f$, $\partial_2 f$ simultaneously vanish at some point $(z_0,z_1,z_2)\ne (0,0,0)$ in $\C^3$.  The curve $f(x)=0$ is a smooth projective geometrically irreducible curve if and only if $f$ is nonsingular (note that $f=\frac{1}{d}\sum_ix_i\partial_i f$, so any common zero of $\partial_0 f,\partial_1 f,\partial_2 f$ is also a zero of $f$).

\begin{definition}
For $d\ge 2$ the \emph{discriminant} $\Delta_d$ is the integer polynomial in $n_d$-variables $a:=(a_u)_{u\in E_d}$ uniquely determined by the following properties:
\begin{itemize}
\item for all $f\in \C[x]_d$ we have $\Delta_d(f)\coloneqq \Delta_d(\delta(f))=0$ if and only if $f$ is singular;
\item $\Delta_d$ is irreducible and has content $1$;
\item $\Delta_d(x_0^d+x_1^d+x_2^d) < 0$.
\end{itemize}
It is a homogeneous polynomial of degree $3(d-1)^2$, by Boole's formula \cite[p. 171]{boole}.\footnote{Boole credits this formula to Sylvester.}
\end{definition}

The first two properties determine $\Delta_d$ up to sign \cite{GKZ}; our sign convention is consistent with the case of quadratic forms:
\[
\Delta_2 = a_{200}a_{011}^2 + a_{101}^2a_{020} + a_{110}^2a_{002}-a_{110}a_{101}a_{011} - 4a_{200}a_{020}a_{002}.
\]
The discriminant $\Delta_3$ is too large to display here; it is a degree 12 polynomial in 10 variables, with 2040 terms and largest coefficient 26\,244.
The discriminant $\Delta_4$ of interest to us is larger still: it is a degree 27 polynomial in 15 variables, with 50\,767\,957 terms and largest coefficient $9\,393\,093\,476\,352$.
Our goal in this section is to briefly explain how we computed it.

\begin{remark}
The discriminant $\Delta_4$ is the largest of the seven projective invariants $I_3,I_6,I_9,I_{12},I_{15},I_{18},I_{27}$ defined by Dixmier~\cite{dixmier}.
Together with six additional invariants $J_9, J_{12}, J_{15}, J_{18}, I_{21}, J_{21}$ studied by Ohno \cite{ohno} they generate the full ring of invariants of ternary quartic forms, as conjectured by Shioda in \cite[Appendix]{shioda} and proved by Ohno in an unpublished preprint \cite{ohno}, and later verified by Elsenhans in the published paper \cite{elsenhans}.  These 13 invariants are collectively known as the \emph{Dixmier-Ohno invariants} and have been studied by many authors \cite{elsenhans,GK,LRRS,LRS}.
Algorithms to compute the Dixmier-Ohno invariants of a given ternary quartic are described in \cite{elsenhans,GK,LRS}, and Magma \cite{Magma} implementations of these algorithms are available \cite{elsenhans,GK,sijsling}.  For our application we want to explicitly compute $\Delta_4$ as a polynomial in 15 variables.  In \cite[Rem.\ 2.2]{ohno} Ohno considers the question of  counting the number of terms in $\Delta_4$, and he proves an upper bound of 58\,456\,030.
As a byproduct of our work, we can now answer Ohno's question: the polynomial $\Delta_4$ has 50\,767\,957 terms.
\end{remark}

\begin{definition}
For $d\ge 1$ the \emph{resultant} $R_d$ is the integer polynomial in $3n_d$ variables $a:=(a_{0,u},a_{1,u},a_{2,u})\in E_d^3$ uniquely determined by the following properties:
\begin{itemize}
\item for all $f_0,f_1,f_2\in \C[x]_d$ we have $R_d(f_0,f_1,f_2)\coloneqq R_d(\delta(f_0),\delta(f_1),\delta(f_2))=0$ if and only if $f_0,f_1,f_2$ have a common root $(z_0,z_1,z_2) \ne (0,0,0)$ in $\C^3$;
\item $R_d$ is irreducible and has content $1$;
\item $R_d(x_0^d,x_1^d,x_2^d)=1$.
\end{itemize}
It is a homogeneous polynomial of degree $3d^2$ \cite[Prop.\ 13.1.7]{GKZ}.
\end{definition}


\begin{proposition}\label{prop:discres}
For all $f\in \C[x]_d$ we have $\Delta_d(f) = -d^{-d^2+3d-3}R_{d-1}(\partial_0f, \partial_1f,\partial_2f)$.
\end{proposition}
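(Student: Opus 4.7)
The plan is to show that $R_{d-1}(\partial_0 f,\partial_1 f,\partial_2 f)$ and $\Delta_d(f)$ are proportional as polynomials in the $n_d$ coefficients $(a_u)_{u\in E_d}$ of $f$, and then to fix the constant of proportionality by a single evaluation at the Fermat form $f_0\coloneqq x_0^d+x_1^d+x_2^d$. The degree comparison is immediate: Boole's formula gives $\deg\Delta_d = 3(d-1)^2$, while $R_{d-1}$ is multihomogeneous of degree $(d-1)^2$ in each of its three arguments (a standard property of the multivariate resultant, consistent with its total degree $3(d-1)^2$), and each coefficient of $\partial_i f$ is a $\Z$-linear form in the $a_u$, so the composite $R_{d-1}(\partial_0 f,\partial_1 f,\partial_2 f)$ is also of total degree $3(d-1)^2$.

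Next I would match the zero loci. Both sides vanish at a given $f\in\C[x]_d$ precisely when the three partials $\partial_0 f,\partial_1 f,\partial_2 f$ share a common nontrivial zero $z\in\C^3$: this is the defining property of $R_{d-1}$, and Euler's identity $d f=\sum_i x_i \partial_i f$, recalled just before the definition of $\Delta_d$, forces any such $z$ to satisfy $f(z)=0$ as well, so the condition is equivalent to $f$ being singular. Since $\Delta_d$ is irreducible, the ideal $(\Delta_d)$ is prime (hence radical), so by the Nullstellensatz $R_{d-1}(\partial_0 f,\partial_1 f,\partial_2 f)\in(\Delta_d)$; combined with the matched degrees, this yields
\[
R_{d-1}(\partial_0 f,\partial_1 f,\partial_2 f)=c\cdot\Delta_d(f)
\]
for some scalar $c\in\C$.

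To pin down $c$, I would evaluate at $f_0$. Since $\partial_i f_0 = d\,x_i^{d-1}$, multihomogeneity of $R_{d-1}$ together with its normalization $R_{d-1}(x_0^{d-1},x_1^{d-1},x_2^{d-1})=1$ gives $R_{d-1}(\partial_0 f_0,\partial_1 f_0,\partial_2 f_0) = d^{3(d-1)^2}$, which is nonzero and thereby certifies $c\ne 0$. Pairing this with the classical value $\Delta_d(f_0) = -d^{d(2d-3)}$ (its sign fixed by the third bullet in the definition of $\Delta_d$, and its magnitude a standard Jacobian computation for the Fermat form, consistent with the displayed $\Delta_2$ giving $\Delta_2(x_0^2+x_1^2+x_2^2)=-4=-2^{2\cdot 1}$) produces
\[
c = \frac{d^{3(d-1)^2}}{-d^{d(2d-3)}} = -d^{d^2-3d+3},
\]
which rearranges to the formula in the proposition.

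The hard step is this last one: the definition of $\Delta_d$ fixes only the sign of $\Delta_d(f_0)$, and it is the classical magnitude $d^{d(2d-3)}$ that produces the specific exponent $-(d^2-3d+3)$ of $d$ in the statement. Everything else—the degree comparison, the Nullstellensatz proportionality, and the Fermat value of $R_{d-1}$—is formal and follows directly from the definitions recalled in this section.
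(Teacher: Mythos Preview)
Your proportionality step is sound and more explicit than the paper's: the zero-locus comparison via Euler's identity, the irreducibility of $\Delta_d$, and the degree match correctly give $R_{d-1}(\partial_0 f,\partial_1 f,\partial_2 f)=c\,\Delta_d(f)$ for some nonzero constant $c$. The paper does not argue this out; it simply cites \cite[Prop.~13.1.7]{GKZ} for the identity \emph{including the power of $d$}, up to sign, and then only has to check the sign.

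The genuine gap is exactly where you flag it: the magnitude $|\Delta_d(f_0)|=d^{d(2d-3)}$. Calling it ``classical'' or ``a standard Jacobian computation'' does not close it. Nothing in the axiomatic definition used here (irreducible, content~$1$, negative at the Fermat form) pins down that magnitude, and the usual way one obtains it is precisely via the formula you are trying to prove. The paper's logic runs in the opposite direction from yours: it takes the constant $d^{-d^2+3d-3}$ (up to sign) from GKZ as input, evaluates the right-hand side at $f_0$ to get $-d^{d(2d-3)}$, and observes that this is negative, matching the sign convention; thus $\Delta_d(f_0)=-d^{d(2d-3)}$ is an \emph{output} of the paper's argument, not an input. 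If you want to keep your self-contained Nullstellensatz route and avoid circularity, you must supply an independent determination of $|c|$, for instance by computing the content of $R_{d-1}(\partial_0 f,\partial_1 f,\partial_2 f)$ as an integer polynomial in the $a_u$ (since $\Delta_d$ has content~$1$, that content equals $|c|$), or by citing a direct evaluation of the discriminant of the Fermat form. Either way this is real work, not a formality, and as written your proof is incomplete at precisely the point that carries the content of the proposition.
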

\begin{proof}
Up to sign this is implied by \cite[Prop.\, 13.1.7]{GKZ}.
To verify the sign, we note that
\[
\Delta_d(x_0^d+x_1^d+x_2^d)=-d^{-d^2+3d-3}R_{d-1}(dx_0^{d-1},dx_1^{d-1},dx_2^{d-1})=-d^{d(2d-3)}<0.\qedhere
\]
\end{proof}



Proposition~\ref{prop:discres} implies that to compute $\Delta_d$ it suffices to compute $R_{d-1}$.  In fact we only need to compute
\[
R_{d-1}(\tilde\partial_0(a),\tilde\partial_1(a),\tilde\partial_2(a)),
\]
where $\tilde\partial_i\coloneqq \delta\circ\partial_i\circ\hat\delta$,
which is a polynomial in $n_d$ variables, rather than $3n_{d-1}$ variables.
For $d=4$ this reduces the number of variables from 30 to 15, which is crucial to us.  Computing $\Delta_4$ is a non-trivial but feasible computation, as we explain below; explicitly computing $R_3$~would be far more difficult.

\subsection{Sylvester's resultant formula for ternary forms}

In this section we briefly recall the classical determinantal formula of Sylvester for computing $R_d$ for $d\ge 2$, following \cite[\S 3.4D]{GKZ}.  It provides an efficient method to compute $R_d(f_0,f_1,f_2)$ for particular values of $f_0,f_1,f_2$, even when $R_d$ is too large to compute explicitly.  We will use this formula to compute $\Delta_4$.

Given $f_0,f_1,f_2\in \C[x]_d$, we define the linear operator
\begin{align*}
T_{f_0,f_1,f_2}\colon \C[x]_{d-2}^3&\to \C[x]_{2d-2}\\
(g_0,g_1,g_2)&\mapsto g_0f_0+g_1f_1+g_2f_2.
\end{align*}
We now define a second linear operator $D_{f_0,f_1,f_2}\colon \C[x]_{d-1}^*\to \C[x]_{2d-2}$ by defining its value on elements $\delta_u\in B_{d-1}^*$ of the dual basis, where $u\in E_{d-1}$.
For each $u\in E_{d-1}$ we may write $f_i$ in the form
\[
f_i=\sum_{j=0}^2 x_j^{u_j+1}F_{ij}^{(u)}
\]
with $F^{(u)}_{ij}\in \C[x]_{d-1-u_j}$.  Without loss of generality we assume $f_i-x_0^{u_0+1}F_{i0}^{(u)}$ has no terms divisible by $x_0^{u_0+1}$ and $f_i-x_0^{u_0+1}F_{i0}^{(u)}-x_1^{u_1+1}F_{i1}^{(u)}$ has no terms divisible by $x_1^{u_1+1}$, so that the $F_{ij}^{(u)}$ are uniquely determined.  We then define
\[
D_{f_0,f_1,f_2}(\delta_u)\coloneqq \det\, [F_{ij}^{(u)}]\in \C[x]_{2d-2}.
\]
Finally, we define the linear operator
\begin{align*}
\Phi_{f_0,f_1,f_2}\colon \C[x]_{d-2}^3 \oplus \C[x]_{d-1}^* &\to \C[x]_{2d-2}\\
((g_0,g_1,g_2), v) &\mapsto T_{f_0,f_1,f_2}(g_0,g_1,g_2) + D_{f_0,f_1,f_2}(v),
\end{align*}
and observe that its domain and codomain both have dimension
\[
3\,\binom{d-2+2}{2} + \binom{d-1+2}{2} = 2d^2-d = \binom{2d-2+2}{2}.
\]

\begin{proposition}\label{prop:resphi}
For all $f_0,f_1,f_2\in \C[x]_d$ we have $R_d(f_0,f_1,f_2) = \pm\det \Phi_{f_0,f_1,f_2}$.
\end{proposition}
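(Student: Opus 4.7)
The plan is to follow Sylvester's classical argument, as outlined in \cite[\S 3.4D]{GKZ}. Both $R_d(f_0,f_1,f_2)$ and $\det \Phi_{f_0,f_1,f_2}$ are polynomials in the coefficients of the $f_i$; I would show they vanish on the same irreducible hypersurface, compare degrees to conclude they agree up to a constant, and finally specialize to pin down the constant as $\pm 1$.

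The first step is the vanishing claim: if $f_0,f_1,f_2$ share a common zero $z = (z_0,z_1,z_2) \ne (0,0,0)$ in $\C^3$, then $\det \Phi_{f_0,f_1,f_2} = 0$. For the $T$-block this is immediate, since $T_{f_0,f_1,f_2}(g_0,g_1,g_2)(z) = g_0(z)f_0(z) + g_1(z)f_1(z) + g_2(z)f_2(z) = 0$. For the $D$-block, evaluating the defining identity $f_i = \sum_j x_j^{u_j+1} F_{ij}^{(u)}$ at $z$ shows that the nonzero vector $(z_0^{u_0+1}, z_1^{u_1+1}, z_2^{u_2+1})$ lies in the kernel of the matrix $[F_{ij}^{(u)}(z)]$, so $D_{f_0,f_1,f_2}(\delta_u)(z) = 0$. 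Hence the image of $\Phi_{f_0,f_1,f_2}$ lies in the codimension-one subspace of $\C[x]_{2d-2}$ of forms vanishing at $z$; since the domain and codomain have equal dimension, $\Phi_{f_0,f_1,f_2}$ fails to be surjective and $\det \Phi_{f_0,f_1,f_2} = 0$.

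Because $R_d$ is irreducible and $\det \Phi$ is a polynomial in the coefficients of $f_0,f_1,f_2$ that vanishes on $\{R_d = 0\}$, the Nullstellensatz gives $R_d \mid \det \Phi$. A degree count then upgrades this to equality up to a constant. The $n_{d-2}$ columns of the $T$-block of the form $g \mapsto g f_i$ are each linear in the coefficients of $f_i$ alone, while each of the $n_{d-1}$ columns in the $D$-block is trilinear in $(f_0,f_1,f_2)$, contributing exactly one factor per row of the $3 \times 3$ matrix $[F_{ij}^{(u)}]$. Thus $\det \Phi$ is homogeneous of degree $n_{d-2} + n_{d-1} = \binom{d}{2} + \binom{d+1}{2} = d^2$ in the coefficients of each $f_i$, matching the corresponding degree of $R_d$. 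Therefore $\det \Phi = c \cdot R_d$ for some $c \in \Z$.

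To finish, I would specialize to $f_i = x_i^d$, where $R_d = 1$ by the normalization in the definition. A short computation shows that $[F_{ij}^{(u)}]$ is then diagonal with entries $x_j^{d-u_j-1}$, so $D_{f_0,f_1,f_2}(\delta_u) = x_0^{d-u_0-1}x_1^{d-u_1-1}x_2^{d-u_2-1}$. Every monomial $x^v \in B_{2d-2}$ is the image of exactly one basis element of the domain: if some (necessarily unique) $v_i \ge d$, then the $T$-column with $g = x^{v - d e_i}$ maps to $x^v$; otherwise $v_j \le d-1$ for all $j$, and the $D$-column indexed by $u_j = d - 1 - v_j$ maps to $x^v$. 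The matrix of $\Phi$ is therefore a permutation matrix, so $\det \Phi = \pm 1 = \pm R_d$. The main obstacle is bookkeeping around the $D$-block: verifying that the $F_{ij}^{(u)}$ are well-defined linear functionals of the coefficients of $f_i$, and that the two families of columns in the specialization exactly partition $B_{2d-2}$.
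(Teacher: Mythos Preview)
Your argument is correct. The paper's own proof is a one-line citation to Lemma~4.9 and Theorem~4.10 in \cite[\S3]{GKZ}, so there is no substantive comparison to make: what you have written is essentially a self-contained unpacking of that reference. The three ingredients you supply---(i) the image of $\Phi_{f_0,f_1,f_2}$ lands in the hyperplane of forms vanishing at any common zero, (ii) the degree count $n_{d-2}+n_{d-1}=d^2$ in each $f_i$, and (iii) the permutation-matrix specialization at $f_i=x_i^d$---are exactly the content of Sylvester's argument as presented in GKZ, and your verification that the $T$-monomials (those $x^v$ with some $v_i\ge d$) and the $D$-monomials (those with all $v_j\le d-1$) partition $B_{2d-2}$ is clean and complete. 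The only cosmetic point is that you speak of ``columns'' where the paper's convention uses rows, but this is immaterial for the determinant.
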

\begin{proof}
This follows from Lemma 4.9 and Theorem 4.10 in \cite[\S3]{GKZ}.
\end{proof}

\begin{remark}
Unlike Theorem 4.10 in \cite[\S3]{GKZ}, we allow a sign ambiguity in Proposition~\ref{prop:resphi}.  In order to view $\Phi_{f_0,f_1,f_2}$ as a linear operator one needs to fix an isomorphism between its domain and its codomain, which we prefer not to do.  The most natural way to compute $\Phi_{f_0,f_1,f_2}$ is to compute values of $T_{f_0,f_1,f_2}$ and $D_{f_0,f_1,f_2}$ on monomial bases of $\C[x]_{d-2}^3$ and $\C[x]_{d-1}^*$; the sign of $\det\Phi_{f_0,f_1,f_2}$ will depend on how one orders these bases and a monomial basis for $\C[x]_{2d-2}$, but the condition $R_d(x^d,y^d,z^d)=1$ determines the correct sign (see Magma scripts in \cite{drew}).
\end{remark}

Our explicit description of $T_{f_0,f_1,f_2}$ and $D_{f_0,f_1,f_2}$ above makes it easy to write down the $(2d^2-d)\times(2d^2-d)$ matrix whose determinant is equal to $R_d(f_0,f_1,f_2)$.
Each row consists of the coefficients of homogeneous polynomial of degree $2d-2$ that is the image of a basis element of $\C[x]_{d-2}^3\oplus \C[x]_{d-1}^*$, each of which we can identify with an element of $E_{d-2}$ or $E_{d-1}$.
For each $u\in E_{d-2}$ we get 3 rows, the coefficient vectors of $x^uf_0$, $x^uf_1$, $x^uf_2$ and for each $u\in E_{d-1}$ we get one row, the coefficient vector of $D_{f_0,f_1,f_2}(\delta_u)=\det[F_{ij}^u]$.

\begin{example}\label{ex:wsdisc}
Let $f\coloneqq y^2z - x^3 - a_2x^2z - a_4xz^2 - a_6z^3$, and let $f_0,f_1,f_2$ be its partial derivatives with respect to $x,y,z$ respectively.  If we order our monomial bases lexicographically (so $x^3$ comes first) and put the 3 rows of $\Phi_{f_0,f_1,f_2}$ corresponding to $T_{f_0,f_1,f_2}$ at the top and the 3 rows corresponding to $D_{f_0,f_1,f_2}$ at the bottom, we have
\[
\Phi_{f_0,f_1,f_2}=
\begin{bmatrix}
               -3 &               0 &           -2a_2 &               0 &               0 &              -a_4\\ 
                0 &               0 &               0 &               0 &               2 &                 0\\
             -a_2 &               0 &           -2a_4 &               1 &               0 &             -3a_6\\
                0 &               0 & -4a_2^2 + 12a_4 &               0 &               0 &  -2a_2a_4 + 18a_6\\
                0 &               6 &               0 &               0 &            4a_2 &                 0\\
                0 &               0 &-2a_2a_4 + 18a_6 &               0 &               0 & 12a_2a_6 - 4a_4^2\end{bmatrix},
\]
and therefore
\[
\Delta_3(f)=-3^{-3}R_2(f_0,f_1,f_2)=-3^{-3}\det\Phi_{f_0,f_1,f_2} =-64a_2^3a_6 + 16a_2^2a_4^2 + 288a_2a_4a_6 - 64a_4^3 - 432a_6^2,
\]
which matches the discriminant of the elliptic curve $y^2=x^3+a_2x^2+a_4x+a_6$.
\end{example}

See \cite[Ch.\,3, \S 4, Ex.\,15]{CLO} and the magma script in \cite{drew} for further details and more examples.

\subsection{Computing \texorpdfstring{$\boldsymbol{\Delta_4}$}{the discriminant polynomial}}

To compute $\Delta_4$ we put $f\coloneqq\sum_{u\in E_4}a_u x^u$ using $\binom{4+2}{2}=15$ formal variables $a_u$.
The resulting polynomial $f$ is then an element of $(\Z[a])[x]_4$, rather than $\C[x]_4$, but we can construct a matrix $M_\Phi$ representing the linear operator $\Phi_{\partial_0 f,\partial_1 f,\partial_2 f}$ as in Example~\ref{ex:wsdisc}, obtaining a $15\times 15$ matrix whose coefficients are homogeneous polynomials in $\Z[a]$, with $\det M_\Phi\in \Z[a]_{27}$.  The first 9 rows of $M_\Phi$ (corresponding to $T_{\partial_0 f,\partial_1 f,\partial_2 f}$) each contain~$5$ zero entries and linear monomials in the nonzero entries.
The remaining 6 rows of $M_\Phi$ (corresponding to $D_{\partial_0 f, \partial_1 f, \partial_2 f})$ contain a $3\times 3$ submatrix of zeros and homogeneous polynomials of degree 3 in the nonzero entries.  After some experimentation we settled on the strategy of computing $\det M_\Phi$ as the sum of $\binom{12}{3}=220$ products of the form $(\det A)(\det B)$ with $A\in \Z[a]^{3\times 3}$ and $B\in \Z[a]^{9\times 9}$ submatrices of $M_\Phi$ with $\det A\in \Z[a]_9$ and $\det B\in \Z[a]_{18}$.
Computing the determinants of all the submatrices $A$ and $B$ takes only a few minutes.
We then computed the 220 products in parallel on a 64-core machine and summed the results to obtain~$\Delta_4$; in total this computation took about 8 core-hours.
The resulting polynomial $\Delta_4$ can be downloaded as a 2GB text file from the author's website \cite{drew}.

\section{Computing discriminants using a monomial tree}

In this section we describe our method for enumerating ternary quartic forms
\[
f(x) =\sum_{u\in E_4} f_ux^u
\]
with coefficients $f_u\in \Z$ satisfying $|f_u|\le B_c$, for some coefficient bound $B_c$, along with their discriminants $\Delta_4(f)$.
As explained in the introduction, our goal is to select from this list all such forms with nonzero discriminants satisfying $|\Delta_4(f)|\le B_\Delta$, for some discriminant bound $B_\Delta$.
Rather than separately computing each discriminant via Sylvester's method (which would not require $\Delta_4$), we will instead enumerate values of $\Delta_4(f)$ in tandem with our enumeration of values of $f$, using a \emph{monomial tree}, a data structure introduced in \cite[\S 3.2]{BSSVY}.

In the computation described in \cite{BSSVY}, the discriminant polynomial has only 246 terms, and the corresponding monomial tree has 703 nodes and fits in 8KB of memory.  In particular, the monomial tree easily fits in L1-cache, and there is very little overhead in recomputing it as required in a parallel computation (indeed, in the computation described in \cite{BSSVY} each thread builds and maintains its own private monomial tree).  In our case the discriminant polynomial $\Delta_4$ is several orders of magnitude larger, and the implementation of the monomial tree merits further discussion, particular in view of the need to support a massively parallel computation that needs to be fault tolerant.

The monomial tree is based on data structure known in the computer science literature as a \emph{trie} (or \emph{prefix tree}).  This data structure represents a set of (\emph{key}, \emph{value}) pairs using a tree whose paths correspond to keys with values stored at the leaves; in addition to supporting lookup operations, a trie allows one to efficiently enumerate all keys with a common prefix (it is commonly used to implement the auto-complete feature found in many user interfaces), but we will exploit it in a different way.

In a monomial tree, the keys are exponent vectors $e\coloneqq (e_0,\ldots,e_n)$ and the values are coefficients $c_u$.
Each leaf of the tree represents a term $c_ea^e$ of a polynomial in the variables $a\coloneqq (a_0,\ldots,a_n)$.
Two uninstantiated monomial trees for the polynomial
\[
g(a_0,a_1,a_2)\coloneqq a_0^3a_2 + 3a_0^2a_1^2 - 4a_0^2a_1a_2 - 5a_0a_1^2a_2 + 2a_1^4 + 7a_1^3a_2 
\]
are shown in Figure~\ref{fig:mtree1} below.
\begin{figure}[h!]
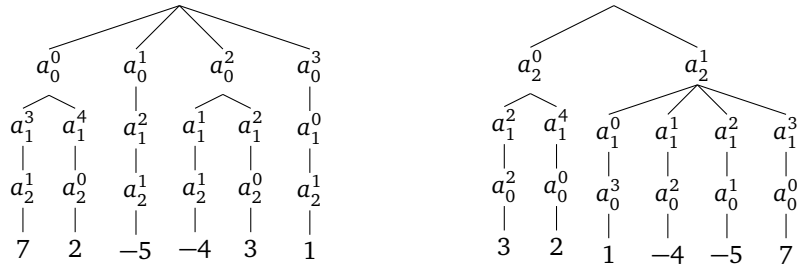

\Tree [. [.$a_0^0$ [.$a_1^3$ [.$a_2^1$ 7 ] ] [.$a_1^4$ [.$a_2^0$ $2$ ] ] ] [.$a_0^1$ [.$a_1^2$ [.$a_2^1$ $-5$ ] ] ] [.$a_0^2$ [.$a_1^1$ [.$a_2^1$ $-4$ ] ] [.$a_1^2$ [.$a_2^0$ $3$ ] ] ] [.$a_0^3$ [.$a_1^0$ [.$a_2^1$ 1 ] ] ] ] \hspace{4pt}
\Tree [. [.$a_2^0$ [.$a_1^2$ [.$a_0^2$ $3$ ] ] [.$a_1^4$ [.$a_0^0$ $2$ ] ] ] [.$a_2^1$ [.$a_1^0$ [.$a_0^3$ $1$ ] ] [.$a_1^1$ [.$a_0^2$ $-4$ ] ] [.$a_1^2$ [.$a_0^1$ $-5$ ] ] [.$a_1^3$ [.$a_0^0$ $7$ ] ] ] ]
\caption{Two monomial trees for $g(a_0,a_1,a_2)$.}\label{fig:mtree1}
\end{figure}

We are free to choose any ordering of the variables, and there are thus many monomial trees that represent the same polynomial; in this case we prefer the tree on the right (both because it has fewer nodes, and because the maximum degree appearing at the top level is smaller).  Once we fix an ordering of the variables, there is no need to actually identify the variable in each node, since this will be implied by its level in the tree; we only need to store the exponent.  For polynomials that are fairly dense, such as $\Delta_4$, we can make the exponent implicit as well by simply using an array of fixed size determined by the maximum degree of the variable in the next level, using null values to indicate the absence of a child of a given degree.

To evaluate a polynomial represented by a monomial tree we work from the bottom up (the opposite of the typical usage pattern for a trie).  Using the monomial tree listed on the right in Figure~\ref{fig:mtree1}, let us partially evaluate it by first making the substitution $a_0=2$, and then the substitution $a_1=-1$; this yields monomial trees for the polynomials $g(2,a_1,a_2$) and $g(2,-1,a_2)$, as shown in Figure~\ref{fig:mtree2}.

\begin{figure}[h!]
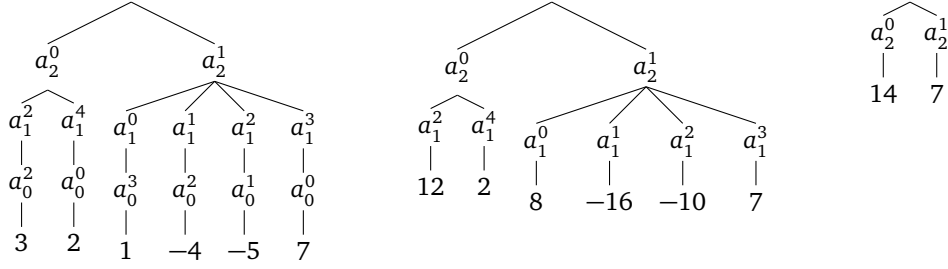

\Tree [. [.$a_2^0$ [.$a_1^2$ [.$a_0^2$ $3$ ] ] [.$a_1^4$ [.$a_0^0$ $2$ ] ] ] [.$a_2^1$ [.$a_1^0$ [.$a_0^3$ $1$ ] ] [.$a_1^1$ [.$a_0^2$ $-4$ ] ] [.$a_1^2$ [.$a_0^1$ $-5$ ] ] [.$a_1^3$ [.$a_0^0$ $7$ ] ] ] ]
\Tree [. [.$a_2^0$ [.$a_1^2$ $12$ ] [.$a_1^4$ $2$ ] ] [.$a_2^1$ [.$a_1^0$ $8$ ] [.$a_1^1$ $-16$ ] [.$a_1^2$ $-10$ ] [.$a_1^3$ $7$ ] ] ]
\Tree [. [.$a_2^0$ $14$ ] [.$a_2^1$ $7$ ] ]
\caption{Monomial trees for $g(a_0,a_1,a_2)$, $g(2,a_1,a_2)$, and $g(2,-1,a_2)$.}\label{fig:mtree2}
\end{figure}

With each substitution we evaluate nodes one level above the leaves (so $3a_0^2$ becomes~$12$ when we substitute $a_0=2$, for example), and sum siblings (this does not impact the first substitution, but $12a_1^2+2a_1^4$ becomes $14$ when we substitute $a_1=-1$, for example).  We ultimately obtain a univariate polynomial in whichever variable we choose to put at the top of the tree; in this example that variable is $a_2$ and we have $g(2,-1,a_2)=14+7a_2^1$, which we could then evaluate on any value of $a_2$ that we wish.

For the sake of illustration we have depicted the monomial tree as ``shrinking" as we make these substitutions, but in reality substitutions are performed by updating auxiliary values attached to each node of the tree, the structure of which is not modified.  At any point in the computation we can undo the most recent substitution by simply incrementing a \emph{level pointer}, a variable that identifies the level of the tree where a variable substitution was most recently made (these are depicted as leaves in the diagrams above).  More generally, we can immediately revert to any prefix of the variable substitutions that have been made by updating the level pointer; this feature is critical to the parallel implementation discussed in the next section.

One can thus view the monomial tree as an \emph{arboreal stack}.  The top of the stack is at the leaves, variable substitutions are ``pushed" on to the stack by updating nodes at the current level, and we can ``pop" any number of variable substitutions off the stack by updating the level pointer (which acts as a stack pointer).

For the discriminant polynomial $\Delta_4$ there are $\binom{4+2}{2}=15$ variables $a_{ijk}$, each corresponding to a possible coefficient of a monomial $x_0^ix_1^ix_2^k$ in a ternary quartic form.  After accounting for the symmetries corresponding to permutations of $x_0,x_1,x_2$, there $15!/3!$ distinct monomial trees we could use to represent $\Delta_4$, depending on how we choose to order the variables.
The polynomial $\Delta_4$ has total degree 27, but its degree in the variables $a_{ijk}$ varies: it has degree $9$ in $a_{400}, a_{040}, a_{004}$, degree $16$ in $a_{211}$, $a_{121}$, $a_{112}$, and degree~$12$ in each of the remaining variables.  One might expect that an optimal approach would have the variables sorted by degree (lowest at the top of the tree, highest at the bottom), but this is not quite true.
After a lot of experimentation we settled on the following variable ordering (working from the top of tree down):
\[
a_{400},\  a_{310},\  a_{301},\  a_{220},\  a_{202},\  a_{130},\  a_{040},\  a_{103},\  a_{004},\  a_{031},\  a_{013},\  a_{022},\  a_{211},\  a_{121},\  a_{112}.
\]
This yields a monomial tree with a total of 246\,798\,264 nodes and level sizes as shown in Table~\ref{table:levels} below.

\begin{table}[h!]
\begin{center}
\begin{tabular}{|rr|rr|rr|rr|rr|}
\hline
$a_{400}$ &  10  & $a_{220}$ &  1772 & $a_{040}$ &  246759 & $a_{031}$ & 11218852 & $a_{211}$ & 50767957\\
$a_{310}$ &  67  & $a_{202}$ &  8128 & $a_{103}$ & 1197716 & $a_{013}$ & 27045996 & $a_{121}$ & 50767957\\
$a_{301}$ & 328  & $a_{130}$ & 48856 & $a_{004}$ & 3957952 & $a_{022}$ & 50767957 & $a_{112}$ & 50767957\\\hline
\end{tabular}
\end{center}
\medskip

\caption{Levels in the monomial tree used for $\Delta_4$.}\label{table:levels}
\end{table}
\vspace{-16pt}

\begin{remark}
As implied by the last four entries of Table~\ref{table:levels}, at the bottom several levels of the tree each node has only one child.
Indeed, fixing the exponent for all but the~3 variables $a_{211}$, $a_{121}$, $a_{112}$ of degree 16 uniquely determines a term in $\Delta_4$.
There does not appear to be an easy way to compute the exponents of $a_{211}$, $a_{121}$, $a_{112}$ directly from the exponents of the other 12 variables, but such a function exists.
\end{remark}
\smallskip

Our implementation uses 16 bytes of storage for each node in the monomial tree.  This includes a 64-bit integer value to store substitution results modulo $2^{64}$ and a 32-bit integer that identifies the parent node by its index in an array that holds all the nodes in the tree; the total amount of memory required is about 4GB.  Loading the terms of $\Delta_4$ from a suitably prepared binary file and constructing the tree in memory takes less than 10 core-seconds on the machines we used (see the next section for details).

Modulo parallelization and optimizations discussed below, our strategy to enumerate ternary quartic forms with their discriminants is given by the following recursive algorithm, in which we use $v_n$ to denote the variable $a_{ijk}$ at level $n$ of the tree, with $v_1=a_{400}$ at the top and $v_{15}=a_{112}$ at the bottom, and view $\Delta_4\coloneqq\Delta_4(v_1,\ldots,v_{15})$ as a polynomial in these variables.  After constructing the monomial tree $T$ for $\Delta_4$ as above, we invoke the following algorithm with $n=15$ (the bottom of the tree).
\medskip

\noindent
\textbf{Algorithm} \textsc{TernaryQuarticFormEnumeration}($T$,$n$)\\
Given a monomial tree $T$ for $\Delta_4$ and a level $n\in [1,15]$:
\begin{enumerate}[1.]
\item If $n=1$ then
\begin{enumerate}[a.]
\item Extract $g(v_1) = \Delta_4(v_1,c_2,\ldots,c_n)\bmod 2^{64}$ from $T$.
\item For each integer $c_1$ in the coefficient interval $[-B_c,B_c]$:
\begin{enumerate}[i.]
\item Compute $D\coloneqq g(c_1)\bmod 2^{64}$ with $-2^{63}\le D < 2^{63}$.
\item If $D = 0$ or $|D|> B_\Delta$ proceed to the next value of $c_1$.
\item Otherwise, compute $\Delta\coloneqq \Delta_4(c_1,\ldots,c_n)\in\Z$ using Sylvester's determinantal formula.\\
If $|\Delta|\le B_\Delta$, output the ternary quartic form defined by $c_1,\ldots, c_{15}$ with discriminant $\Delta$.
\end{enumerate}
\end{enumerate}
\item Otherwise, for each integer $c_n$ in the coefficient interval $[-B_c,B_c]$:
\begin{enumerate}[a.]
\item Apply the substitution $v_n\leftarrow c_n$ to $T$.
\item Recursively invoke \textsc{TernaryQuarticFormEnumeration}($T,n-1$).
\end{enumerate}
\end{enumerate}
\medskip

\noindent
We assume that in the process of applying the substitution $v_n\leftarrow c_n$ the value of $c_n$ is stored in $T$ so that it can be accessed later in step 1.a.iii if needed (so the data structure for $T$ includes an auxiliary array that holds $c_1,\ldots c_n$).
We now note the following optimizations and implementation details:
\begin{itemize}
\setlength{\itemsep}{6pt}
\item We are interested in $\PGL_3(\Z)$-isomorphism classes of ternary quartic forms represented by a form within our coefficient bounds.  Permutations of variables and sign changes do not change the absolute value of the discriminant, so we can restrict our enumeration to $0\le c_{15} \le c_{14} \le c_{13}$.  This saves a factor of 48.
\item In the recursive call at level~$n$, we can completely ignore levels of the tree below~$n$.  In a parallel implementation, we can fork the execution at any level and divide the work among child processes that only need the upper part of the tree.  As described in the next section, we forked at level $n=10$, at which point the upper part of the tree fits in 700MB of memory.
\item In our implementation we use loops, not recursion, and completely unwind the inner loop, making each integer value $c_1\in [-B_c,B_c]$ fully explicit.

\item With the coefficient bound $B_c=9$ we only need to compute $g(c_1)$ for 19 values of $c_1$.  This makes the finite differences approach of \cite{KS} that was used in \cite{BSSVY} less attractive, as there is an initial setup cost and we cannot as easily take advantage of the fact that the values of $c_1$ (and their powers) are known at compile time. Instead, we write $g_1(v_1)=g_0+v_1h_1(v_1^2)+h_2(v_1^2)$, with $\deg h_1,\deg h_2\le 4$.
We then have $g(0)=g_0$, and for $c_1\in [1,B_c]$ we compute,
\[
g(c_1) = g_0 +  c_1h_1(c_1^2) + h_2(c_1^2),\quad g(-c_1) = g_0 - c_1h_1(c_1^2) + h_2(c_1^2),
\]
reusing the values of $c_1h_1(c_1^2)$ and $h_2(c_1^2)$, and taking advantage of the fact that all the powers of $c_1$ are known at compile time.
\end{itemize}

The last point is crucial, as most of the time will be spent in the inner loop evaluating $g(c_1)$.
For the 9 values $c_1=0,\pm 1, \pm 2, \pm 4, \pm 8$ we can compute $g(c_1)$ using only 64-bit additions/subtractions and bit shifts, and for the remaining $c_1\in [-B_c,B_c]$ we use an average of four 64-bit multiplications and six 64-bit additions.

With $B_c=9$, benchmarking shows that on average we spend less than 22 clock cycles computing each value of $g(c_1)$ and comparing the result with $0$ and $B_\Delta$ (steps 1.b.ii and 1.b.iii of the algorithm), which is consistent with the operation counts above.
Overall, the average time per iteration of the inner loop is about 33 clock cycles; this includes the cost of maintaining the monomial tree $T$, performing variable substitutions, iterating values of $c_n$, extracting the coefficients of $g(v_1)$ from~$T$, and time spent computing $\Delta_4(c_1,\ldots,c_n)\in \Z$ using Sylvester's formula and multi-precision arithmetic (but step 1.b.iii is executed so rarely that its impact is negligible).

\begin{remark}
Another advantage of unrolling the inner loop so that powers of $c_1$ are available at compile time (thereby turning polynomial evaluation into a dot product), is that the multiplications can be performed in parallel.  Although we did not take direct advantage of this in our implementation, it allows the compiler to minimize instruction latency via pipelining.
The AVX-512 instruction set supported on newer Intel CPUs (Knights Landing and Skylake) provides SIMD instructions that support simultaneous 8-way 64-bit multiplication and 8-way 64-bit additive reduction, which in principle should reduce the cost of evaluating $g(c_1)$ by close to a factor of~4.  At the time we performed the computations described in this article these newer processors were not yet widely available, but we plan to exploit this feature in future computations.
\end{remark}

\section{Distributed parallel implementation}

We performed our computations using preemptible compute instances on Google's Compute Engine \cite{google}, which is part of the Google Cloud Platform (GCP).  We used the \texttt{n1-highcpu-32} virtual machine type, each instance of which has 32 virtual CPUs (vCPUs) and 28.8GB memory; the 32 vCPUs correspond to hyperthreads running on~16 physical cores.  This machine type is widely available on all GCP regions (geographical areas) and generally offers an optimal price/performance ratio for CPU intensive tasks.

With preemptible compute instances, computations are not allowed to run for more than 24 hours, and the computation may be halted by GCP at any time.  Preempted computations can be restarted if and when the computational resources become available, and the restarted instance will have access to any information that was saved to disk, so in our implementation of the \textsc{TernaryQuarticFormEnumeration} algorithm we incorporated a checkpointing facility that tracks the current state of progress by writing the values of $c_{15},c_{14},\ldots, c_m$ to disk at regular intervals (we used $m=7$).
To restart we simply read the most recently checkpointed values of $c_{15},\ldots,c_m$, rebuild the monomial tree, perform the corresponding variable substitutions $v_n=c_n$, and resume where we left off (restarting typically takes 10-15 seconds).

To efficiently distribute the computation across multiple instances using the coefficient bound $B_c=9$ we divide the work into $\binom{B_c+3}{3}(2B_c+1)^2 = 79\,420$ jobs.
Each job is given a fixed set of integers $(c_{15},c_{14},c_{13},c_{12},c_{11})$, with $0\le c_{15}\le c_{14}\le c_{13}\le B_c$ and $c_{12},c_{11}\in [-B_c,B_c]$ (the constraints on $c_{15},c_{14},c_{13}$ come from the symmetry optimization noted above), and then proceeds to enumerate the $(2B_c+1)^{10}=19^{10}\approx 10^{12.79}$ values of the integers $c_{10},\ldots,c_{1}$ with $|c_n|\le B_c$.  Based on the GCP resource quotas available to us, we assigned 2 jobs to each 32-vCPU instance, allowing us to use a total of up to 39\,710 preemptible instances at any one time, each equipped with 32 virtual CPUs.

To utilize the 32 virtual CPUs on each instance in parallel, after constructing the monomial tree and applying substitutions using the values of $c_{15},\ldots,c_{11}$ assigned to the job, we fork the process into 32 child processes.  As noted in the previous section, after performing this substitutions the relevant part of the monomial tree (levels $n\le 10$) only requires 700MB of memory, allowing each child process to have a private copy of this portion of the tree while staying within our 28.8GB memory footprint.  Each child process then iterates over values of $c_{10}$, $c_9$, $c_8$ as usual, but only proceeds to $c_7, \ldots, c_1$ when $(2B_c+1)^2c_{10}+(2B_c+1)c_9+c_8\equiv i\mod 32$, where $i\in [0,31]$ is an integer that distinguishes the child process among its 32 siblings.

With this approach it takes a typical 32-vCPU instance between 3000 and 4000 seconds of wall time to complete one job (just under an hour, on average).  The physical machine types vary, but most of the machines we used were either 2.5GHZ Intel Xeon E5v2 (Ivy Bridge) CPUs or 2.2GHz Intel Xeon E5v4 (Broadwell) CPUs.
The total time to complete all 79,420 jobs was about 290 vCPU-years.

\begin{remark}
One might assume 2 vCPUs = 1 core, but with our computational load vCPUs do substantially better than this.
It is difficult to make an exact comparison due to the variety of machines used, but none of our GCP CPUs had a clock speed above 2.5GHz and the majority were 2.2GHz.
If one estimates the total number of vCPU clock cycles ($\approx 10^{19.33\pm 0.3}$) and divides by the number of ternary quartic forms processed ($\approx 10^{17.69}$), the average throughput is $44\pm 3$ vCPU clock cycles per form, versus 33 clock cycles for a single thread on an idle core.  One explanation for this is that while 22 of the 33  average clock cycles represent processor bound low latency arithmetic operations in the inner loop that are unlikely to benefit from hyperthreading, the remainder are spent on memory bound activity (maintaining the monomial tree), which can be overlapped with processor bound activity by another vCPU.
\end{remark}

We ran the computations described above on Sunday June 11, 2017, distributing the work across 24 GCP zones located in 9 regions (4 in North America, 2 in Europe, and 3 in Asia).
We run the computation in two stages, one in the morning and one in the afternoon, each involving approximately 20\,000 preemptible 32-vCPU instances.
Figure \ref{fig:GCP} shows the CPU utilization over the course of the day; each color represents one of the 24 zones we used.
As can be seen in the chart, our CPU utilization peaked around 9:00, at which point we were utilizing the equivalent of 580,000 vCPUs at full capacity (the total number of active vCPUs was well over 600,000, but not all were running at full capacity at the same time, due to preemption and startup/restart latency).

\begin{figure}
\begin{center}
\includegraphics[scale=0.55]{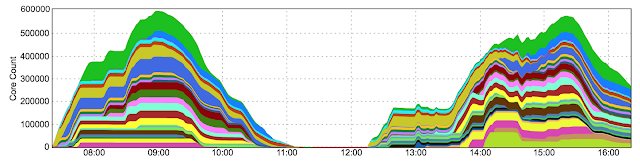}
\caption{vCPU utilization on GCP}\label{fig:GCP}
\end{center}
\end{figure}

\section{Identifying isomorphism class representatives}

With coefficient bound $B_c=9$ and discriminant bound $B_\Delta=10^7$, the enumeration of ternary quartic forms described in the previous sections produces a list of more than $10^7$ forms $f(x,y,z)$.
But our goal is to construct a list of smooth plane quartic curves $C_f\colon f(x,y,z)=0$ that we distinguish only up to isomorphism over~$\Q$.
The coefficient constraints that we added to optimize the search eliminate some obvious isomorphisms (at least for curves where the coefficients of $xyz^2,\ xy^2z,\ x^2yz$ are distinct), and in some cases this does result in a unique isomorphism class representative appearing in our enumeration.  But in the vast majority of cases it does not.  Indeed, among the 1378 forms $f(x,y,z)$ we identified with absolute discriminant $|\Delta_4(f)|=3^95^2$, only two $\Q$-isomorphism classes of curves are represented:
\[
x^3z + x^2z^2 + xy^3 - xz^3 + y^3z=0,\qquad x^3z + y^4 + 2y^3z - yz^3=0,
\]
and in general, among the more than ten million curves we found, only 82\,241 distinct $\Q$-isomorphism classes are represented.
Our goal in this section is to briefly explain how we efficiently reduced our initial list of more than $10^7$ ternary quartic forms to a list of 82\,241 unique $\Q$-isomorphism class representatives.

We first note that this computation cannot be easily accomplished using any of the standard computer algebra packages.  Even if one of them supported reliable isomorphism testing of smooth plane curves over $\Q$ (to the author's knowledge, none do), pairwise isomorphism testing is expensive and we would need to perform hundreds of millions of such tests.
We want a strategy that can be applied in bulk and efficiently reduce a large set of smooth plane curves to a subset of unique isomorphism class representatives.

Given an equation $f(x,y,z)$ in our list $S$ of ternary quartic forms satisfying the coefficient bound $B_c$ and discriminant bound $B_\Delta$, let $S_f$ denote the set of ternary quartic forms $g$ for which $C_g$ is $\Q$-isomorphic to $C_f$.
The set $S_f$ is finite, and if we could efficiently compute it, our problem would be solved.
Rather than computing $S_f$, we will compute successively larger subsets of it and use them to reduce the size of $S$ by removing all elements of $S\cap S_f$ distinct from $f$ (or distinct from a chosen representative of $S_f$ that we happen to like better than $f$).

Let us fix the following set of generators for $\GL_3(\Z)$:
\[
A_1\coloneqq\begin{bmatrix}1&1&0\\0&1&0\\0&0&1\end{bmatrix},\ \  A_2\coloneqq\begin{bmatrix}0&1&0\\-1&0&0\\0&0&1\end{bmatrix},\ \  A_3\coloneqq\begin{bmatrix}-1&0&0\\0&1&0\\0&0&1\end{bmatrix},\ \ A_4\coloneqq\begin{bmatrix}0&0&-1\\1&0&0\\0&1&0\end{bmatrix}.
\]
These induce invertible linear transformations
\begin{align*}
A_1\colon f(x,y,z)&\mapsto f(x+y,y,z),\qquad A_2\colon f(x,y,z)\mapsto f(y,-x,z),\\
A_3\colon f(x,y,z)&\mapsto f(-x,y,z),\qquad\ \ \ \, A_4\colon f(x,y,z)\mapsto f(-z,x,y),
\end{align*}
which do not change the $\Q$-isomorphism class of the curve $f(x,y,z)=0$ or its absolute discriminant.
(This means we will not detect isomorphisms $f(x,y,z)\mapsto f(ax,y,z)$ with $a\ne \pm 1$, but these change the discriminant by $a^{36}$, which will push the discriminant well beyond our discriminant bound).
Let $\|f\|$ denote the maximum of the absolute values of the coefficients of $f$; note that
$\|f\|$ is preserved by $A_2,A_3,A_4$, but not $A_1$.
The following algorithm performs a breadth-first search of the Cayley graph of $\GL_3(\Z)$ with respect to our generators, subject to the restriction that it only explores paths $1,M_1,\ldots M_n\in \GL_3(\Z)$ in the graph for which $\|M_i(f)\|\le b$ for $1\le i\le n$.
\smallskip

\noindent
\textbf{Algorithm} \textsc{BoundedIsomorphismClassEnumeration}($f$,$b$)\\
Given a ternary quartic form $f(x,y,z)$ and a bound $b\ge \|f\|$, compute $S_{f,b}\subseteq S_f$ as follows:

\begin{enumerate}[1.]
\item Let $U\coloneqq \{f\}$ and $V\coloneqq \{f\}$.
\item Let $W\coloneqq \{\}$, and for $g\in V$:
\begin{enumerate}[a.]
\item If $\|A_1(g)\|\le b$ then set $W\leftarrow W\cup\{A_1(g)\}$.
\item Set $W\leftarrow W\cup \{A_2(g),\, A_3(g),\, A_4(g)\}$.
\end{enumerate}
\item Set $V\leftarrow \{g:g \in W\text{ and }g\not\in U\}$.
\item If $V$ is empty then output $S_{f,b}\coloneqq U\cup \{-g:g \in U\}$ and terminate.
\item Set $U\leftarrow U\cup V$ and return to step (2).
\end{enumerate}

Our strategy is to start with $b=B_c$ and for each $f\in S$ remove every element of $S_{f,b}$ from $S$ except for~$f$, and then increase $b$ and repeat.
With $b=B_c$ and our initial set of over ten million forms $S$ an efficient implementation of the algorithm above takes only ten minutes and reduces the number of curves to around 125\,000.
The algorithm becomes slower as $b$ increases, but even with $b=B_c^2=81$ it takes just eight core-hours, yielding a list of 82\,241 curves that appear to be non-isomorphic.

We are now left with the task of trying to prove that the remaining set of curves $S$ are all non-isomorphic.  Here again we adopt a bulk strategy and compute two sets of invariants for every $f\in S$.  First we use the Magma package \cite{sijsling} which implements the algorithms described in \cite{LRS} to compute the Dixmier-Ohno invariants of~$C_f$; these uniquely identify the $\Qbar$-isomorphism class of $C_f$.  Second, we compute a vector of point counts of $C_f$ modulo all primes $p\le 256$ of good reduction for $C_f$, using the \texttt{smalljac} software package described in \cite{KS}.
Both computations are quite fast; it takes only a few minutes to do this for all 82\,241 of our candidate curves.

We now define an equivalence relation on $S$ by defining $C_f$ and $C_g$ to be equivalent if and only if their normalized Dixmier-Ohno invariants coincide and their point counts at all common primes $p\le 256$ of good reduction coincide.  The resulting equivalence classes partition $S$ into 82\,239 singleton sets and the following pair of curves with absolute discriminant 324\,480:
\begin{align*}
C_f&\colon x^3y + x^3z + x^2y^2 - 2x^2yz - 4x^2z^2 - 4xy^3 + xz^3 + 2y^4 - 2yz^3 + z^4 = 0,\\
C_g&\colon x^4 + x^3y + 2x^3z + 4x^2y^2 - xy^3 - 2xy^2z + y^4 + 3y^3z + 5y^2z^2 + 4yz^3 + 2z^4 = 0.
\end{align*}
These curves both have good reduction modulo $7$ but are not isomorphic as curves over $\F_7$, as can be verified by exhaustively checking all possible isomorphisms, or by using the algorithm of \cite{LRS} to reconstruct unique $\F_7$-isomorphism class representatives of all twists with these Dixmier-Ohno invariants and verifying that $C_1$ and $C_2$ are isomorphic to distinct representatives.  As observed by one of the referees, these curves are isomorphic over~$\Q(i)$ via the maps $(x:y:z)\mapsto (z:ix:(1-i)x/2-y)$ and $(iy:(1+i)y/2+z:-x)\mapsfrom(x:y:z)$.

\section{Examples}\label{sec:examples}

We conclude with a list of the curves $C\colon f(x,y,z)=0$ that we found with absolute discriminants $|\Delta|$ less than $10^4$, as well as two other curves of larger discriminant that are discussed below.
For each curve we list the (geometric) real endomorphism algebra of its Jacobian $J$, and the decomposition of $J$ up to $\Q$-isogeny.
The real endomorphism algebras were computed by Jeroen Sijsling using an adaptation of the algorithms described in~\cite{CMSV18}.  An abelian threefold $J/\Q$ with real endomorphism algebra $\R\times\R$ or $\R\times \C$ over $\Qbar$ is isogenous to the product of an abelian surface $A$ with $\End(A_\Qbar)=\Z$ and an elliptic curve $E$ (see Table 2 of \cite{S16}, for example), and it is not hard to show that $A$, $E$, and the isogeny $J\sim A\times E$ can all be defined over $\Q$.  There is a finite set of possibilities for the isogeny class of $E$, since its conductor must divide that of $J$, and by comparing Euler factors one can quickly rule out all but one possibility.
We have not attempted to construct explicit Prym varieties (which requires defining a morphism $C\to E$), but we have uniquely determined the isogeny class of $E$, and therefore of $A$.

\begin{table}[h!]
\begin{tabular}{rlcc}
$|\Delta|$ &  $f(x,y,z)$ & $\End(J_\Qbar)\otimes \R$ & $\Q$-isogeny factors\\\toprule
2940 & $x^3y + x^3z + x^2y^2 + 3x^2yz + x^2z^2 - 4xy^3 - 3xy^2z$ & $\mathbf{M}_2(\R)\times \R$ & \href{http://www.lmfdb.org/EllipticCurve/Q/14a/}{\texttt{14a}},\ \href{http://www.lmfdb.org/EllipticCurve/Q/}{\texttt{14a}}, \href{http://www.lmfdb.org/EllipticCurve/Q/}{\texttt{15a}}\\
&$\qquad- 3xyz^2 - 4xz^3 + 2y^4 + 3y^2z^2 + 2z^4$\\
4727 & $x^3z + x^2z^2 + xy^3 - xy^2z + y^2z^2 - yz^3$&$\R$&simple\\
5835 & $x^4 + 2x^3y + 2x^3z - 4x^2y^2 + 2x^2yz - 4x^2z^2 - xy^3 - xz^3 + 2y^4$&$\R\times\R$ & \href{http://www.lmfdb.org/Genus2Curve/Q/389/a/}{\texttt{389.a}},\ \href{http://www.lmfdb.org/EllipticCurve/Q/}{\texttt{15a}}\\
&$\quad\medspace\ - 3y^3z + 5y^2z^2 - 3yz^3 + 2z^4$\\
5978 & $x^3z + x^2y^2 + x^2yz + xy^3 + xy^2z + xyz^2 + xz^3 + y^3z + y^2z^2$&$\R\times\R$ & \href{http://www.lmfdb.org/Genus2Curve/Q/427/a/}{\texttt{427.a}},\ \href{http://www.lmfdb.org/EllipticCurve/Q/}{\texttt{14a}}\\
6050 & $x^3z + x^2y^2 + xy^3 - xy^2z - 2xz^3 - y^2z^2 - z^4$&$\R\times\R$ & \texttt{550.a},\ \href{http://www.lmfdb.org/EllipticCurve/Q/}{\texttt{11a}}\\
6171 & $x^3z + x^2yz + x^2z^2 - xy^3 + xy^2z + xz^3 - y^2z^2 + yz^3$&$\R\times\R$ & \texttt{561.a},\ \href{http://www.lmfdb.org/EllipticCurve/Q/}{\texttt{11a}}\\
6608 & $x^3z + x^2yz + x^2z^2 + xy^3 - 3xy^2z - 4xz^3 - y^4 + 2y^3z + 2z^4$&$\R\times\R$ & \href{http://www.lmfdb.org/Genus2Curve/Q/472/a/}{\texttt{472.a}},\ \href{http://www.lmfdb.org/EllipticCurve/Q/}{\texttt{14a}}\\
7376 & $x^3z + x^2y^2 + x^2z^2 + xy^3 + xyz^2 + y^3z + yz^3$ & $\R$ & simple\\
8107 & $x^3z + x^2yz + x^2z^2 + xy^3 + xyz^2 + y^3z + y^2z^2 + yz^3$ & $\R\times\R$ & \texttt{737.a},\ \href{http://www.lmfdb.org/EllipticCurve/Q/}{\texttt{11a}}\\
8233 & $x^3z + x^2yz + x^2z^2 + xy^3 - xy^2z + y^4 - y^3z - yz^3$ & $\R$ & simple\\
8325 & $x^3z + x^2y^2 - 2x^2z^2 + y^3z - 2y^2z^2 + z^4$&$\R\times\R$ & \href{http://www.lmfdb.org/Genus2Curve/Q/555/a/}{\texttt{555.a}},\ \href{http://www.lmfdb.org/EllipticCurve/Q/}{\texttt{15a}}\\
8471 & $x^3z + x^2y^2 - x^2z^2 + xy^3 - xy^2z + xyz^2 - xz^3 + y^3z - y^2z^2$ & $\R$ & simple\\
9607 & $x^3z + x^2yz + x^2z^2 - xy^3 + xyz^2 + y^2z^2 + yz^3$ & $\R$ & simple\\\midrule
75\,816 & $x^3z + x^2y^2 + 2x^2yz - x^2z^2 + 2xy^3 - xy^2z - xz^3 - yz^3$ & $\R\times \C$ & \texttt{702.a},\ \href{http://www.lmfdb.org/EllipticCurve/Q/}{\texttt{27a}}\\
144\,400 & $x^3z + 2x^2yz + 2x^2z^2 + xy^3 - xz^3 + 2y^4 + 2y^3z + y^2z^2$ & $\R\times \R$ &\texttt{760.a},\ \href{http://www.lmfdb.org/EllipticCurve/Q/}{\texttt{190b}}\\\bottomrule
\end{tabular}
\smallskip
\bigskip

\caption{Smooth plane quartics over $\Q$ of small discriminant.}\label{table:small}
\end{table}
\vspace{-8pt}

In the table above isogeny classes of abelian surfaces and elliptic curves are identified by a label containing its conductor (Cremona labels in the case of elliptic curves).
The highlighted abelian surface isogeny classes \texttt{389.a}, \texttt{427.a}, \texttt{472.a}, \texttt{555.a} are isogeny classes of genus 2 Jacobians listed in the LMFDB \cite{lmfdb}.
The isogeny classes \texttt{561.a} and \texttt{737.a} likely correspond to the Prym varieties listed in \cite[Table 2]{BK14}, while the isogeny classes \texttt{550.a}, \texttt{702.a}, \texttt{732.a} are likely to be three of the eight ``unknown'' isogeny classes corresponding to paramodular cuspidal newforms of weight~2 and level $N\le 1000$ listed in the tables of Poor and Yuen \cite{PR18}.  We have verified that the Euler factors of isogeny class \texttt{550.a} match those listed in \cite[Table 2]{FKL}, and we have verified that the expected functional equation for the $L$-functions of the isogeny classes \texttt{550.a}, \texttt{702.a}, \texttt{760.a} holds to a precision of 1000 decimal places.
We thank Armand Brumer for bringing the \texttt{550.a} example to our attention.

Among the absolute discriminants listed in Table~\ref{table:small}, exactly one is prime, $8233$, which arises for the curve
\[
C_1\colon  x^3z + x^2yz + x^2z^2 + xy^3 - xy^2z + y^4 - y^3z - yz^3 = 0.
\]
As noted in the introduction, in a similar search of hyperelliptic curves of genus 3, the smallest prime absolute discriminant that appears is also 8233, for the hyperelliptic curve
\[
C_2\colon y^2 + (x^4+x^3+x^2+1)y = x^7-8x^5-4x^4+18x^3-3x^2-16x+8.
\]
Using the average polynomial-time algorithms described in \cite{HS14,HS16,HS18} to compute 
Frobenius traces at all primes $p\ne 8233$ up to $2^{28}$ for both curves, we find that they coincide in every case.  This is compelling evidence that their Jacobians are isogenous.
Computation of their period matrices by Nils Bruin suggest that they are related by an isogeny whose kernel is isomorphic to $(\Z/2\Z)^4\times \Z/4\Z$.    In principle, one can use trace computations to prove or disprove the existence of an isogeny via a Faltings-Serre  argument (see \cite[Thm.\ 2.1.5]{BPPTVY} for an effective algorithm), but we have not yet attempted to do so.

Examples of hyperelliptic and nonhyperelliptic curves with isogenous (even isomorphic) Jacobians have been previously constructed~\cite{howe}, but these constructions all involve abelian varieties with extra structure (typically products of elliptic curves).
We have confirmed that the Jacobians of these discriminant 8233 curves are generic in the sense that their Mumford-Tate groups are as large as possible (all of $\GSp_6$).  In genus $3$ this is equivalent to having no extra endomorphisms over $\Qbar$ (type I in Albert's classification), see \cite[\S 2.3]{MZ}, and to having large Galois image (open in $\GSp_6(\hat \Z)$), see \cite{CM}.  To prove this it is enough to show that for some prime $\ell$ the image of the Galois representation given by the action of $\Gal(\Qbar/\Q)$ on the $\ell$-torsion subgroup of $\Jac(C_i)$ contains $\Sp_6(\Z/\ell\Z)$: from the proof of \cite[Lem.\ 2.4]{zywina}, the image of the $\ell$-adic representation must contain $\Sp(\Z_\ell)$, and this implies that the Mumford-Tate group is $\GSp_6$.
Taking $\ell=5$, if we compute the characteristic polynomial of Frobenius at the primes $p=31,41$ and reduce modulo $\ell$ we obtain
\[
\bar f_{31}(t)\coloneqq t^6 + t^4 + 3t^3 + t^2 + 1\qquad\text{and}\qquad\bar f_{41}(t)\coloneqq t^6 + 4t^4 + 2t^3 + 4t^2 + 1.
\]
A computation in Magma shows that among the maximal subgroups of $\Sp_6(\Z/5\Z)$ (ten, up to conjugacy), none contain a pair of elements that realize these two characteristic polynomials; see the Magma scripts in \cite{drew} for details.
This proves that the mod-$5$ Galois image contains $\Sp_6(\F_5)$; as argued above, this implies that the Mumford-Tate groups of the Jacobians of the curves $C_1$ and $C_2$ are both equal to $\GSp_6$.

\end{document}